\documentclass[12pt,reqno]{amsart}

\newcommand\version{October 2, 2026}

\usepackage{amsmath,amsfonts,amsthm,amssymb,amsxtra}
\usepackage{bbm} 
\usepackage{esint} 
\usepackage[hidelinks]{hyperref}

\newtheorem{theorem}{Theorem}
\newtheorem{proposition}[theorem]{Proposition}
\newtheorem{lemma}[theorem]{Lemma}

\theoremstyle{definition}

\theoremstyle{remark}

\renewcommand{\epsilon}{\varepsilon}

\newcommand{\ind}{\mathbf 1}
\newcommand{\loc}{{\rm loc}}

\renewcommand{\phi}{\varphi}
\newcommand{\R}{\mathbb{R}}

\DeclareMathOperator{\supp}{supp}

\begin{document}

\title[Another characterization of $\dot W^{1,p}(\R^d)$ --- \version]{Another characterization of $\dot W^{1,p}(\R^d)$}

\author{Rupert L. Frank}
\address[Rupert L. Frank]{Mathe\-matisches Institut, Ludwig-Maximilians Universit\"at M\"unchen, The\-resienstr.~39, 80333 M\"unchen, Germany, and Munich Center for Quantum Science and Technology, Schel\-ling\-str.~4, 80799 M\"unchen, Germany}
\email{r.frank@lmu.de}

\date{\version}
\thanks{\copyright\, 2026 by the author. This paper may be reproduced, in its entirety, for non-commercial purposes.}

\begin{abstract}
	We improve an earlier characterization of the Sobolev space $\dot W^{1,p}(\R^d)$, $1<p<\infty$, in terms of the oscillations of a function on balls of varying centers and radii. In contrast to earlier work, finiteness of the supremum is replaced by the formally weaker assumption of finiteness of the liminf, thereby answering a question by Brezis.
\end{abstract}

\maketitle

\section{Main result and discussion}

In our previous paper \cite{Fr}, we obtained a necessary and sufficient condition for the membership of a function in the homogeneous Sobolev space $\dot W^{1,p}(\R^d)$ that does not involve derivatives. Our goal here is to weaken the sufficient condition. By definition, the space $\dot W^{1,p}(\R^d)$ consists of all $f\in L^1_\loc(\R^d)$ whose distributional gradient satisfies $\nabla f\in L^p(\R^d)$. Throughout this paper, $d\geq 1$.

In order to recall the result from \cite{Fr}, we set $\R^{d+1}_+:=\R^d\times\R_+$ and, for $(a,r)\in\R^{d+1}_+$, we set $B_r(a):=\{ x\in\R^d:\ |x-a|<r\}$. For a function $f\in L^1_\loc(\R^d)$, let
$$
m_f(a,r) := \fint_{B_r(a)} \left| f(x) - \fint_{B_r(a)} f(y)\,dy \, \right|dx \,,
$$
where $\fint_{B_r(a)} \ldots = |B_r(a)|^{-1} \int_{B_r(a)}\ldots$. Finally, let $\nu_p$ be the measure on $\R^{d+1}_+$ with
$$
d\nu_p(a,r) = \frac{da\,dr}{r^{p+1}} \,.
$$

In \cite{Fr}, it was shown that, given $1<p<\infty$ and $f\in L^1_\loc(\R^d)$, we have $f\in\dot W^{1,p}(\R^d)$ if and only if 
\begin{equation}
	\label{eq:supcondition}
\sup_{\kappa>0} \kappa^p \nu_p(\{m_f>\kappa\}) < \infty \,,
\end{equation}
and
	\begin{equation}
		\label{eq:mainold}
		\| \nabla f\|_{L^p(\R^d)}^p \simeq_{p,d} \sup_{\kappa>0} \kappa^p \nu_p(\{m_f>\kappa\}) \,.
	\end{equation}
	Moreover, in this case, we have
	\begin{equation}
		\label{eq:mainlim}
		\lim_{\kappa\to 0}  \kappa^p \nu_p(\{m_f>\kappa\}) = c_{d,p}\, \| \nabla f\|_{L^p(\R^d)}^p
	\end{equation}
with an explicit constant $c_{d,p}>0$. Here and in what follows, we use the notations $\lesssim_{p,d}$, $\gtrsim_{p,d}$ and $\simeq_{p,d}$ to suppress constants that only depend on $d$ and $p$.

In this paper, we will show that in the sufficient condition \eqref{eq:supcondition} for membership $f\in\dot W^{1,p}(\R^d)$, the supremum over all $\kappa>0$ can be replaced by the liminf as $\kappa\to 0$.

The following is the main result of this paper.

\begin{theorem}\label{main}
	Let $1< p<\infty$ and let $f\in L^1_\loc(\R^d)$ with
	\begin{equation}
		\label{eq:liminfcond}
	\liminf_{\kappa\to 0} \kappa^p \nu_p(\{m_f>\kappa\}) <\infty \,.
	\end{equation}
	Then $f\in\dot W^{1,p}(\R^d)$ and
	\begin{equation}
		\label{eq:main}
		\| \nabla f\|_{L^p(\R^d)}^p \lesssim_{p,d} \liminf_{\kappa\to 0} \kappa^p \nu_p(\{m_f>\kappa\}) \,.
	\end{equation}
\end{theorem}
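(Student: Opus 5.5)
We reduce to the smooth case by mollification and then exploit the fact that, for small $\kappa$, the set $\{m_f>\kappa\}$ is governed by the first-order Taylor behaviour of $f$.

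\textbf{Step 1 (regularization).} Let $\phi_\epsilon$ be a standard mollifier and $f_\epsilon=f*\phi_\epsilon$. Since $f_\epsilon-\fint_{B_r(a)}f_\epsilon=\int\phi_\epsilon(z)\bigl(f(\cdot-z)-\fint_{B_r(a-z)}f\bigr)dz$, we get the pointwise bound
$$
m_{f_\epsilon}(a,r)\le \int \phi_\epsilon(z)\, m_f(a-z,r)\,dz .
$$
This is an average of translates, and it does not directly transfer level-set bounds, because the weak-type quantity is not convex. I would therefore work instead with a truncated quantity: for fixed small $\kappa$, use the layer-cake inequality $\nu_p(\{g>2\kappa\})\le \nu_p(\{\phi_\epsilon*m_f\ge 2\kappa\})$. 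Split $m_f=m_f\ind_{\{m_f>\kappa\}}+m_f\ind_{\{m_f\le\kappa\}}$; the second part contributes at most $\kappa$. For the first part, Chebyshev gives a bound by $\kappa^{-1}\int m_f\ind_{\{m_f>\kappa\}}\,d\nu_p$, but this requires the large-level information that we do not have.

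\textbf{Step 2 (alternative route via small scales only).} The key observation is that the lower bound in \cite{Fr} can be proved using only the limit $\kappa\to0$, in the spirit of \eqref{eq:mainlim}. Fix $\kappa_n\to0$ realizing the liminf $L$. Restrict attention to a bounded region $a\in B_R(0)$ and scales $r<\delta$. For $f\in C^1$ one has $m_f(a,r)=r\,c_d|\nabla f(a)|+o(r)$ uniformly on compacts, and hence
$$
\kappa^p\nu_p(\{m_f>\kappa\}\cap\{|a|<R\})\to c_{d,p}\int_{B_R}|\nabla f|^p .
$$
For general $f$, we use duality instead. Test against $\psi\in C_c^\infty(B_R;\R^d)$ and aim to show $|\int f\,\mathrm{div}\,\psi|\lesssim L^{1/p}\|\psi\|_{p'}$. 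Write $\int f\,\mathrm{div}\,\psi$ as a limit of a difference-quotient expression over balls of radius $r$, and average over $r\in(\kappa/\lambda,\ldots)$. For each center $a$ and radius $r$, the relevant local contribution is bounded by $m_f(a,r)/r$ times a local norm of $\psi$.

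\textbf{Step 3 (the counting argument).} Bound $\int|m_f(a,r)|/r\,|\psi_r(a)|\,da$ by splitting at the level $\kappa$. On $\{m_f\le\kappa\}$ the contribution is at most $(\kappa/r)\|\psi\|_1$. Choosing $r$ comparable to $\kappa\,t$ with $t$ ranging in a fixed dyadic window $[\tau,2\tau]$ and averaging in $t$, the measure of the bad set in the window is controlled by $\kappa^p\nu_p(\{m_f>\kappa\})\,\tau^{p}$ up to constants. The $\kappa/r\sim 1/t$ term then gives an $L^{p}$-type bound after optimizing $\tau$ against $\|\psi\|_{p'}$ via Hölder.

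\textbf{Main obstacle.} The difficulty is that $m_f/r$ is unbounded on the bad set, and the liminf only controls the measure of that set, not the size of $m_f$ on it. I would handle this with a John--Nirenberg/telescoping argument along dyadic radii. A good ball at radius $\kappa t$ with $m_f\le\kappa$ allows one to compare averages at neighbouring scales. If most points have a good ball at some scale in the window, which holds because the bad measure is $\lesssim L\tau^p$ relative to the total $\sim\tau^p\cdot$(volume), then a covering lemma (Vitali) yields $|f_{B}-f_{B'}|\lesssim\kappa$ for adjacent balls in a chain. This is a Lipschitz-type difference-quotient estimate at scale $\kappa\tau$, with an exceptional set controlled by $L$.

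Letting $\kappa\to0$ along $\kappa_n$, these difference-quotient bounds give $\|\Delta_h f/|h|\|_{L^p}^p\lesssim L$ uniformly in small $h$. The standard characterization of $\dot W^{1,p}$ by difference quotients, valid for $p>1$, then yields \eqref{eq:main}. I expect this last covering/telescoping step, which converts measure control of bad balls into $L^p$ control of difference quotients, to be the hardest part.
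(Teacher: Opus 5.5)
There is a genuine gap, and it sits exactly at the step you flag as the main obstacle.

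\textbf{What your argument actually gives.} Your single-window analysis in Steps 2--3 yields only a Chebyshev-type statement. Averaged over $r\in[\kappa\tau,2\kappa\tau]$, the slope $m_f(a,r)/r$ is $\lesssim 1/\tau$ off a set of measure $\lesssim \tau^p\kappa^p\nu_p(\{m_f>\kappa\})\approx L\tau^p$. This is a weak-type bound at the single level $\lambda=1/\tau$, probed at the $\kappa$-dependent scale $\kappa/\lambda$. No choice of $\tau$ and no H\"older optimization turns it into $|\int f\operatorname{div}\psi|\lesssim L^{1/p}\|\psi\|_{p'}$, for three reasons. The good-set term $(\kappa/r)\|\psi\|_1\sim\|\psi\|_1/\tau$ does not vanish as $\kappa\to0$. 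It also involves the wrong norm of $\psi$. And on the bad set you have no bound on $m_f$ at all.

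\textbf{Why this cannot be patched by chaining.} The liminf hypothesis gives information at only one small level $\kappa$ at a time. Higher levels enter only through the trivial inclusion $\{m_f>\lambda\}\subset\{m_f>\kappa\}$, with no decay in $\lambda$. So at a fixed scale $h$ you see only the vanishing level $\kappa/h$; large slopes $\lambda$ are visible only at the small scales $r\approx\kappa/\lambda$. Consequently the claimed bound $\|\Delta_h f/|h|\,\|_{L^p}^p\lesssim L$ uniformly in $h$ is essentially the whole theorem. The John--Nirenberg/Vitali chaining does not produce it. Telescoping averages across radii needs bounds on $m_f$ on bad balls, which you do not have. Moreover, a covering or Poincar\'e-type comparison between consecutive dyadic radii costs a dimensional constant per step, and this compounds over the $\sim\log(h/\kappa)$ scales between $\kappa\tau$ and a fixed scale $h$.

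\textbf{The missing idea and how the paper supplies it.} You need a loss-free mechanism that dominates the gradient at a \emph{fixed} scale $h$ by the bad sets at \emph{all} finer scales simultaneously, with $\kappa\to0$ taken only at the very end. In the paper this comes from the infinite convolution $\rho=\eta_1*\eta_{1/2}*\cdots$, for which $\rho_h=\eta_h*\rho_{h/2}$. Hence $G_j=\nabla(\rho_{2^{-j}h}*f)$ satisfies $|G_j|\le\eta_{h_j}*|G_{j+1}|$ with constant exactly $1$. A factor $C>1$ per step would require control of $\sum_j(2C)^{jp}|A_j(\delta)|$, whereas $\nu_p$ only controls $\sum_j 2^{jp}|A_j(\delta)|$.

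This is combined with $h_j|G_j(a)|\le C_0m_f(a,r)$ for $r\in[2h_j,4h_j)$. The soft-threshold recursion $(|G_j|-\delta/h_j)_{+}\le\eta_{h_j}*(|G_{j+1}|-\delta/h_{j+1})_{+}+(\delta/h_j)\ind_{A_j(\delta)}$ then iterates to $(|G_0|-\delta/h)_{+}\lesssim H+h^{-d}\ind_{B_{2h}}*H$, where $H=\delta\sum_j h_j^{-1}\ind_{A_j(\delta)}$. The gradient on bad sets is never bounded directly; it is only propagated from finer scales.

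The paper sums over all the disjoint windows $[2h_j,4h_j)$ rather than selecting one. Using $(\sum_j 2^j b_j)^p\le 2^p\sum_j 2^{jp}b_j$ for $b_j\in\{0,1\}$, this gives the strong bound $\|H\|_{L^p}^p\lesssim\delta^p\nu_p(\{m_f>\delta/C_0\})$. The tail $J\to\infty$ is disposed of using $f\in L^\infty$ and $p\ge1$, followed by truncation. Because $h$ is held fixed, the good-set error is $(\kappa/h)|K|^{1/p}$, which does vanish as $\kappa\to0$, unlike your $\|\psi\|_1/\tau$.
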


\medskip

\emph{Remarks.} (a) The question whether the finiteness of the liminf, rather than the supremum, is sufficient for membership in the Sobolev space was raised by Brezis in February 2022 after receiving a first version of \cite{Fr}. The present paper gives a positive answer to his question.

(b) We discuss the case $p=1$ in Section \ref{sec:p1}.

(c) Remarkably, the finiteness of the liminf (that is, \eqref{eq:liminfcond}) implies the existence of the limit (that is, \eqref{eq:mainlim}). This is a consequence of Theorem \ref{main} and the results in \cite{Fr}.

(d) There is a long history of derivative-free characterizations of Sobolev spaces. A recent review is \cite{Ng2}, which focuses, in particular, on recent contributions by Brezis and his co-authors. In particular, the work \cite{BrSevSYu} by Brezis, Seeger, Van Schaftingen and Yung unifies and extends earlier work by Nguyen \cite{Ng} (see also \cite{BoNg,BrNg,BrNg2}) and by Brezis, Van Schaftingen and Yung \cite{BrvSYu}. These works use pointwise differences rather than our oscillation function $m_f$. For previous works involving $m_f$, see \cite{Cal,DVSh,Se,RoSe2,CoSuTe} and the references in \cite{Fr}. Far-reaching extensions of the results of \cite{Fr} were obtained, for example, in \cite{ZLYYZ,HyKo,DLYYZ}. Yet other derivative-free characterizations of Sobolev spaces appear, for example, in \cite{AlMaVe,YaYuZh,DoMi}.

(e) In most of the above-mentioned works, a supremum condition (similar to \eqref{eq:supcondition}) rather than a liminf condition (similar to \eqref{eq:liminfcond}) appears. The first work that we are aware of where a liminf condition appears for weak-type threshold functionals is the short but deep work by Bourgain and Nguyen \cite{BoNg}; see also \cite{Ng1}. The sufficiency of such a liminf condition is stated as an open problem in \cite{BrSevSYu} in the context of the characterizations by Brezis, Seeger, Van Schaftingen and Yung. Partial results can be found in \cite{GoPi,Ng2} and in the references cited therein. While there are certain conceptual similarities between these proofs and ours, we think that our arguments are rather different. In particular, we do not reduce matters to a one-dimensional inequality.

\medskip

The remainder of this paper is organized as follows. In the following section, we present the proof of Theorem \ref{main}, while in Section \ref{sec:p1} we discuss the case $p=1$.


\subsection*{Acknowledgements}

The author acknowledges partial support through the German Research Foundation grants EXC-2111-390814868, TRR 352–Project-ID 470903074, and FR 2664/3-1. He is grateful to H.~Brezis for raising the question investigated in this paper. The one-dimensional version of the main result was obtained in Feburary 2022 and shared with colleagues. For the extension to the higher-dimensional case the author acknowledges the use of AI tools. All mathematical arguments and proofs in the final manuscript were checked and written by the author.


\section{Proof of Theorem \ref{main}}

\subsection{The key estimate and its consequence}

We fix $\eta\in C_c^\infty(B_1)$ with $\eta\geq0$ and $\int_{\R^d} \eta\,dx =1$ once and for all, depending only on $d$. Using the dilation convention $\eta_h(x) :=h^{-d}\eta(x/h)$ we define
\begin{equation}\label{eq:rho}
	\rho := \eta_1*\eta_{1/2}*\eta_{1/4}*\cdots.
\end{equation}
This infinite convolution is well defined and satisfies
\[
\rho\in C_c^\infty(B_2) \,,\qquad \rho\geq0\,,\qquad \int_{\R^d} \rho\,dx =1 \,.
\]
The use of such an infinite convolution is a central ingredient of our proof. It appears in a different context, for example, in \cite[Section 1.3]{Ho}.

We now come to the key estimate in this paper.

\begin{lemma}\label{lem:single}
	Let $1\leq p<\infty$. Then, for every $f\in L^1_{\loc}(\R^d)$, every $h,\kappa>0$ and every measurable $K\subset\R^d$ with finite measure, we have
	\begin{equation}\label{eq:single}
		\|\nabla(\rho_h*f)\|_{L^p(K)}
		\lesssim_{d,p}
		\frac{\kappa}{h} \, |K|^{1/p}
		+ \kappa \, \nu_p(\{m_f>\kappa\})^{1/p} \,.
	\end{equation}
\end{lemma}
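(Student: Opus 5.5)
The plan is to exploit the self-similarity $\rho_h=\eta_h*\rho_{h/2}$, and more generally
$$
\rho_h=\eta_h*\eta_{h/2}*\cdots*\eta_{2^{-k+1}h}*\rho_{2^{-k}h}\qquad (k\geq 0),
$$
which lets me put the derivative on whichever scale $2^{-k}h$ is convenient:
$$
\nabla(\rho_h*f)=\Big(\mathop{\ast}_{j\neq k}\eta_{2^{-j}h}\Big)*\nabla(\eta_{2^{-k}h}*f).
$$

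The basic pointwise estimate is as follows. Since $\int\nabla\eta_r=0$ and $\supp\eta_r\subset B_r$, for every $a$ and every constant $c$,
$$
|\nabla(\eta_r*f)(a)|=\Big|\int\nabla\eta_r(a-y)\,(f(y)-c)\,dy\Big|\lesssim_d r^{-1}\fint_{B_r(a)}|f-c|.
$$
With $c=\fint_{B_r(a)}f$ this gives $|\nabla(\eta_r*f)(a)|\lesssim r^{-1}m_f(a,r)$. So at a point where $m_f(a,2^{-k}h)\le\kappa$ the "scale-$k$ gradient" is $\lesssim \kappa 2^k/h$. This yields the first term of \eqref{eq:single} when $k=0$, and a worse but summable bound for larger $k$.

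The core is a stopping-time choice of scale. For $x\in K$ I want to choose $k$ adapted to the bad set $E=\{m_f>\kappa\}$. Concretely, $k(x)$ is the smallest $k\ge0$ such that $m_f(a,r)\le\kappa$ for all $(a,r)$ with $|a-x|\lesssim h$ and $r\in[2^{-k}h,2^{-k+1}h]$. The neighborhood is enlarged by a fixed factor, to absorb the supports of the remaining convolution factors, which all lie in $B_{2h}$.

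I then use the comparability of oscillations under inclusion of balls: if $B_r(a)\subset B_s(b)$ with $s\le 2r$, then
$$
m_f(a,r)\le 2\fint_{B_r(a)}|f-c_b|\le 2^{d+1}m_f(b,s),\qquad c_b:=\fint_{B_s(b)}f.
$$
With this, "some bad ball at scale $\sim 2^{-k}h$ near $x$" can be upgraded to "$\{m_{f}>2^{-d-1}\kappa\}$ contains a whole region of $\nu_p$-measure $\gtrsim (2^{k}/h)^{p}(2^{-k}h)^{d+1}/(2^{-k}h)$ near $(x,2^{-k}h)$". Up to the harmless change of $\kappa$ by a dimensional factor, one could equally run the argument with $\kappa$ replaced by $2^{d+1}\kappa$ throughout. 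On the set $\{k(x)=k\}$, Jensen's inequality applied to the probability measures $\eta_{2^{-j}h}$ gives
$$
|\nabla(\rho_h*f)(x)|\lesssim \kappa\,2^{k}/h .
$$
Hence
$$
\|\nabla(\rho_h*f)\|_{L^p(K)}^p\lesssim \Big(\frac{\kappa}{h}\Big)^p|K|+\kappa^p\sum_{k\ge1}\Big(\frac{2^{k}}{h}\Big)^p\,|\{x\in K:\ k(x)=k\}| .
$$
Each point with $k(x)=k\ge1$ had bad balls at all scales $h,\dots,2^{-k+1}h$ near it. In particular it had one at scale $\sim2^{-k+1}h$, so $|\{k(x)=k\}|\cdot(2^k/h)^p$ is dominated, via a Vitali-type covering at that scale, by $\nu_p$ of the thickened bad region in the slab $r\in[2^{-k}h,2^{-k+2}h]$. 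These slabs have bounded overlap in $k$, so the sum is $\lesssim\nu_p(\{m_f>c_d\kappa\})$.

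The main obstacle I expect is the stopping step. The "good" condition at scale $2^{-k}h$ must hold uniformly on the whole $2h$-neighborhood that the outer convolutions average over, not just at $x$. At the same time the bad sets counted must still be comparable to $|\{k(x)=k\}|$ with constants independent of $k$. I would handle this by defining goodness via all centers in $B_{4h}(x)$ and then bounding the measure of $\{k(x)=k\}$ by the measure of the $4h$-neighborhood of the bad centers at scale $2^{-k+1}h$. The concern is that this loses a factor $2^{kd}$ from enlarging $2^{-k}h$-balls to $h$-balls. If that loss occurs, I would instead let $k$ vary with the point $a$ inside the convolution, i.e.\ run the stopping time inside the integral defining $\rho_h$, so that the neighborhoods shrink geometrically with the scale.
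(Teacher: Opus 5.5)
The gap is at the step you flagged, and no covering argument can close it. The pointwise bound $|\nabla(\rho_h*f)(x)|\lesssim\kappa2^{k(x)}/h$ is itself too lossy to give \eqref{eq:single}. Your goodness condition at scale $k$ requires $m_f(a,r)\le\kappa$ for \emph{every} center $a\in B_{4h}(x)$. So one bad pair $(a_j,r_j)$ with $r_j\sim2^{-j}h$ makes scale $j$ bad for all $x\in B_{4h}(a_j)$, a set of measure $\sim h^d$. Yet the region of $\{m_f>c\kappa\}$ it generates has $\nu_p$-measure only $\sim(2^{-j}h)^{d-p}$.

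For instance, let $f$ be a sum of bumps with disjoint supports, of height $C\kappa$ and radius $\sim2^{-j}h$ for $0\le j\le N$, placed with gaps comparable to their sizes inside $B_{2h}(0)$, so that each produces a bad ball at scale $2^{-j}h$. Then $k(x)>N$ for all $x\in B_h(0)$, and your right-hand side is at least $\kappa^p(2^N/h)^ph^d$. On the other hand, $\nu_p(\{m_f>c\kappa\})\lesssim_C\sum_{j\le N}(2^{-j}h)^{d-p}$, and in fact $|\nabla(\rho_h*f)|\lesssim C\kappa/h$ everywhere since $\|f\|_\infty\le C\kappa$. The ratio is unbounded in $N$. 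For $d>p$ and $N=\infty$ you even get $k(x)=\infty$ on a set of positive measure while $\nu_p(\{m_f>\kappa\})<\infty$, a case your proposal does not treat. So the claimed domination of $(2^k/h)^p|\{k(x)=k\}|$ by the $\nu_p$-measure of a slab fails: dilating small bad sets to size $h$ is a genuine loss.

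Your fallback of running the stopping time inside the convolution is the right instinct, but carrying it out is the substance of the proof, and that part is missing. The paper measures badness through the gradient itself, at the point and scale where it occurs. Set $h_j=2^{-j}h$, $G_j=\nabla(\rho_{h_j}*f)$ and $A_j=\{h_j|G_j|>\delta\}$. The support and cancellation of $\nabla\rho_{h_j}$ give $h_j|G_j(a)|\le C_0\,m_f(a,r)$ for $r\in[2h_j,4h_j)$. Hence $A_j\times[2h_j,4h_j)\subset\{m_f>\delta/C_0\}$ with no spatial enlargement, and $\sum_jh_j^{-p}|A_j|\lesssim\nu_p(\{m_f>\delta/C_0\})$.

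The identity $G_j=\eta_{h_j}*G_{j+1}$ then yields the soft recursion $(|G_j|-\delta/h_j)_+\le\eta_{h_j}*(|G_{j+1}|-\delta/h_{j+1})_+ +(\delta/h_j)\ind_{A_j}$. This is your stopping time performed pointwise in every layer of the convolution. Iterate, and use that all partial products $\eta_{h_0}*\cdots*\eta_{h_{j-1}}$ are bounded by $Ch^{-d}\ind_{B_{2h}}$. This gives $(|G_0|-\delta/h)_+\le H+Ch^{-d}\ind_{B_{2h}}*H$ with $H=\delta\sum_jh_j^{-1}\ind_{A_j}$. The spatial spreading now enters as an \emph{average}, controlled in $L^p$ by Young's inequality, instead of as a supremum over a neighborhood as in your Jensen step. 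Moreover, $\|H\|_{L^p}^p\le2^p\delta^p\sum_jh_j^{-p}|A_j|$ by $(\sum_j2^jb_j)^p\le2^p\sum_j2^{jp}b_j$ for $b_j\in\{0,1\}$.

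Finally, the iteration requires the tail $\eta_{h_0}*\cdots*\eta_{h_{J-1}}*(|G_J|-\delta/h_J)_+$ to vanish as $J\to\infty$. The paper proves this for bounded $f$ from $h_J^{-p}|A_J|\to0$ and $p\ge1$. It then removes boundedness by truncating $f$ at level $\pm M$, which at most doubles $m_f$, and applying Fatou's lemma.
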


Before proving this lemma, let us see how it implies our main result.

\begin{proof}[Proof of Theorem \ref{main}]
	Fix $1< p<\infty$ and $f\in L^1_{\rm loc}(\R^d)$ satisfying the liminf condition \eqref{eq:liminfcond}. For given $h>0$ and measurable $K\subset\R^d$ with finite measure, we pass to the liminf as $\kappa\to 0$ in \eqref{eq:single} and obtain
	\[
	\|\nabla(\rho_h*f)\|_{L^p(K)}
	\lesssim_{d,p} \liminf_{\kappa\to 0} \kappa\, \nu_p(\{m_f>\kappa\})^{1/p} \,.
	\]
	Exhausting $\R^d$ by bounded sets $K$ yields, by monotone convergence,
	\[
	\|\nabla(\rho_h*f)\|_{L^p(\R^d)}
	\lesssim_{d,p} \liminf_{\kappa\to 0} \kappa \, \nu_p(\{m_f>\kappa\})^{1/p} \,.
	\]
	
	Since $\rho_h*f\to f$ in $L^1_{\loc}(\R^d)$ as $h\downarrow0$, for each
	$\Phi\in C_c^1(\R^d;\R^d)$ we have
	\begin{equation}\label{eq:mainproof}
	\left|\int_{\R^d}f\,\operatorname{div}\Phi \,dx \right|
	=\lim_{h\downarrow0}\left|\int_{\R^d}\nabla(\rho_h*f)\cdot\Phi \,dx \right|
	\lesssim_{d,p} \liminf_{\kappa\to 0} \kappa\, \nu_p(\{m_f>\kappa\})^{1/p} \|\Phi\|_{L^{p'}} \,.
	\end{equation}
	Since $p>1$, duality gives a distributional gradient in $L^p$ with the asserted bound. This proves Theorem \ref{main}.
\end{proof}


\subsection{Proof of the key estimate}

\begin{proof}[Proof of Lemma \ref{lem:single}]
	We divide the proof into four steps. In the first three steps we suppose $f\in L^\infty(\R^d)$ and we remove this assumption in the fourth step. For fixed $h>0$, we put, for each $\delta>0$ and each integer $j\geq 0$,
	\[
	h_j:=2^{-j}h \,,
	\qquad 
	G_j:=\nabla(\rho_{h_j}*f),\qquad
	A_j(\delta):=\{a\in\R^d:h_j|G_j(a)|>\delta\} \,.
	\]
	
\medskip

	\emph{Step 1.} For $2h_j\leq r<4h_j$, cancellation of $\nabla\rho_{h_j}$ and its support in $B_{2h_j}$ give
	\begin{align*}
		h_j|G_j(a)|
		&=h_j\left|\int_{B_r(a)}\nabla\rho_{h_j}(a-x)
		\bigl(f(x)-f_{B_r(a)}\bigr)\,dx\right|\\
		&\leq h_j\|\nabla\rho_{h_j}\|_\infty
		\int_{B_r(a)}|f(x)-f_{B_r(a)}|\,dx \\
		& \leq C_0 \, m_f(a,r) \,,
	\end{align*}
	where $C_0$ is a constant, depending only on $d$. Hence
	\[
	A_j(\delta) \times[2h_j,4h_j)\subset\{m_f>\delta/C_0\} \,.
	\]
	We take the measure $\nu_p$ of both sides. Since the intervals $[2h_j,4h_j)$ are pairwise disjoint and
	\[
	\int_{2h_j}^{4h_j}\frac{dr}{r^{p+1}}
	=\frac{2^{-p}-4^{-p}}{p}\,h_j^{-p} \,,
	\]
	it follows that
	\begin{equation}\label{eq:count}
		\sum_{j\geq0}h_j^{-p}|A_j(\delta)|
		\leq C_p \, \nu_p\bigl(\{m_f>\delta/C_0\}\bigr) \,.
	\end{equation}
	If the measure on the right side is infinite, the estimate we seek with $\kappa = \delta/C_0$ is trivial, so we may assume that it is finite. Therefore, also the left side of \eqref{eq:count} is finite.
	
	To proceed, we need the elementary inequality
	\begin{equation}
		\label{eq:elementary}
	\left(\sum_{j\geq0}2^j b_j\right)^p
	\leq 2^p\sum_{j\geq0}2^{jp}b_j,
	\qquad b_j\in\{0,1\}.
	\end{equation}
	Indeed, if the right side is finite (otherwise the inequality is trivially true) and there is at least one nonzero $b_j$, we have $J:=\sup\{ j \geq 0:\ b_j = 1\}<\infty$ and then
	\[
	\left(\sum_{j\geq0}2^j b_j\right)^p \leq \left(\sum_{j=0}^J 2^j \right)^p = \left( 2^{J+1}-1 \right)^p < 2^{p(J+1)} \leq 2^p \sum_{j\geq 0} 2^{jp} b_j \,.
	\]
	
	For the following nonnegative function on $\R^d$,
	\[
	H(\cdot,\delta) :=\delta\sum_{j\geq0}h_j^{-1}\ind_{A_j(\delta)} \,,
	\]
	the elementary inequality \eqref{eq:elementary}, together with \eqref{eq:count}, implies
	\begin{align}\label{eq:H}
		\|H(\cdot,\delta) \|_{L^p}^p & = \frac{\delta^p}{h^p} \int_{\R^d} \left( \sum_{j\geq 0} 2^j \ind_{A_j(\delta)}(a) \right)^p da \notag \\
		& \leq 2^p\, \frac{\delta^p}{h^p} \int_{\R^d} \sum_{j\geq 0} 2^{jp} \ind_{A_j(\delta)}(a) \,da \notag \\
		& = 2^p\delta^p \sum_{j\geq0}h_j^{-p}|A_j(\delta)| \notag \\
		& \leq C_p' \, \delta^p \, \nu_p\bigl(\{m_f>\delta/C_0\}\bigr) \,.
	\end{align}
	
	\emph{Step 2.} At this point we use the following crucial relation, which directly follows from the construction of $\rho_h$,
	\begin{equation*}
		\rho_h=\eta_h*\rho_{h/2}\qquad\text{for all}\ h>0 \,.
	\end{equation*}
	It follows that 
	\begin{equation}\label{eq:gradient-refine}
		\nabla (\rho_h*f)=\eta_h*\nabla (\rho_{h/2}*f) 
		\qquad\text{for all}\ h>0 \,.
	\end{equation}
	In particular, $G_j = \eta_{h_j}* G_{j+1}$. Since $\eta_{h_j}$ is nonnegative and has unit integral, we deduce
	\[
	|G_j|\leq\eta_{h_j}*|G_{j+1}|
	\leq\eta_{h_j}*(|G_{j+1}|-h_{j+1}^{-1} \delta)_++2h_j^{-1} \delta \,.
	\]
	On the set $A_j(\delta)$ we may subtract $h_j^{-1} \delta$, while off $A_j(\delta)$ the function $(|G_{j}|-h_j^{-1} \delta)_+$ vanishes. Consequently,
	\begin{equation}\label{eq:soft}
		(|G_{j}|-h_j^{-1} \delta)_+ \leq\eta_{h_j}*(|G_{j+1}|-h_{j+1}^{-1} \delta)_+ +h_j^{-1} \delta\ind_{A_j(\delta)}.
	\end{equation}
	Let $Q_0$ denote a Dirac mass at the origin and, for $j\geq1$, let
	\[
	Q_j:=\eta_{h_0}*\eta_{h_1}*\cdots*\eta_{h_{j-1}}.
	\]
	Iterating \eqref{eq:soft} yields, for any $J\geq 1$,
	\begin{equation}\label{eq:iterate}
		(|G_{0}|-h^{-1} \delta)_+ \leq Q_J*(|G_{J}|-h_J^{-1} \delta)_+ + \sum_{j=0}^{J-1}Q_j*(h_j^{-1} \delta \ind_{A_j(\delta)}).
	\end{equation}
	For every $j\geq1$ the function $Q_j$ is nonnegative, is supported in $B_{2h}$, and satisfies $\|Q_j\|_\infty\leq\|\eta_h\|_\infty$. Thus the bound
	\begin{equation}\label{eq:kernel}
		0\leq Q_j(x)\leq C \, h^{-d} \, \ind_{B_{2h}}(x)\qquad \text{for all}\ j\geq1
	\end{equation}
	holds with a constant $C$, depending only on $d$ (and, in particular, independent of $j$).
	
\medskip
	
	\emph{Step 3.} 	Our goal is to let $J\to\infty$ in \eqref{eq:iterate}. It is at this step that we use the assumption $f\in L^\infty(\R^d)$, which gives
	\[
	\|G_J\|_\infty\leq h_J^{-1}\|\nabla\rho\|_1\|f\|_\infty.
	\]
	Since $(|G_{J}|-h_J^{-1} \delta)_+$ is bounded by $|G_J|$ and vanishes outside $A_J(\delta)$, we have, as $J\to\infty$,
	\begin{equation}\label{eq:terminal}
		\|(|G_{J}|-h_J^{-1} \delta)_+\|_1
		\lesssim_d \|f\|_\infty h_J^{-1}|A_J(\delta)|
		= \|f\|_\infty h_J^{p-1}
		\bigl(h_J^{-p}|A_J(\delta)|\bigr)\longrightarrow0 \,.
	\end{equation}
	Indeed, the terms in parentheses tend to zero by \eqref{eq:count}, and $h_J^{p-1}$ is bounded because $p\geq1$. By \eqref{eq:kernel}, $Q_J*(|G_{J}|-h_J^{-1} \delta)_+\to0$ uniformly as $J\to\infty$ for fixed $h>0$.
	
	Thus, letting $J\to\infty$ in \eqref{eq:iterate}, we find
	\begin{equation*}
		(|G_{0}|-h^{-1} \delta)_+ \leq \sum_{j=0}^{\infty}Q_j*(h_j^{-1} \delta \ind_{A_j(\delta)}).
	\end{equation*}
	Using \eqref{eq:kernel} on the terms with $j\geq1$ and recalling the definition of $H(\cdot,\delta)$ gives
	\[
	(|G_{0}|-h^{-1} \delta)_+ \leq h^{-1} \delta \ind_{A_0(\delta)}
	+C h^{-d}\ind_{B_{2h}}*H(\cdot,\delta)
	\leq H(\cdot,\delta) +C h^{-d}\ind_{B_{2h}}*H(\cdot,\delta) \,.
	\]
	By Young's convolution inequality, this pointwise inequality implies
	\[
	\|(|G_{0}|-h^{-1} \delta)_+ \|_{L^p}\leq C' \|H(\cdot,\delta) \|_{L^p}
	\]
	with a constant $C'$ depending only on $d$. Together with $|G_0|\leq h^{-1}\delta +(|G_{0}|-h^{-1} \delta)_+$ and \eqref{eq:H}, this proves
	\[
	\|\nabla(\rho_h*f)\|_{L^p(K)}
	\leq\frac{\delta}{h}|K|^{1/p}
	+C_{d,p}\left[\delta^p\nu_p\bigl(\{m_f>\delta/C_0\}\bigr)\right]^{1/p}.
	\]
	Taking $\delta=C_0\kappa$ proves the claimed inequality \eqref{eq:single} for $f\in L^\infty(\R^d)$. The constant in the result does not depend on $\|f\|_\infty$; boundedness was used only in \eqref{eq:terminal}.
	
	\medskip

	\emph{Step 4.} We now remove the boundedness assumption. Let
	\[
	T_M(s):=\max\{-M,\min\{s,M\}\} \,.
	\]
	For general $f\in L^1_{\loc}(\R^d)$, let $f_M :=T_M\circ f$. For each ball $B$,
	\begin{align*}
		\frac{1}{|B|}\int_B|f_M-(f_M)_B|\,dx \leq\frac{2}{|B|}\int_B|T_M(f)-T_M(f_B)|\,dx \leq\frac{2}{|B|}\int_B|f-f_B|\,dx \,,
	\end{align*}
	so $m_{f_M}\leq2m_f$. Applying the bounded version of \eqref{eq:single} to $f_M$ with $\kappa$ replaced by $2\kappa$, we obtain
	\[
	\|\nabla(\rho_h*f_M)\|_{L^p(K)}
	\leq2C_{d,p}\left(\frac{\kappa}{h} \, |K|^{1/p}
	+ \kappa \nu_p(\{m_f>\kappa\})^{1/p}\right).
	\]
	For fixed $h$, $L^1_{\rm loc}$ convergence $f_M\to f$ implies pointwise convergence
	$\nabla(\rho_h*f_M)\to\nabla(\rho_h*f)$. Fatou's lemma completes the proof after enlarging the constant.
\end{proof}


\section{The case \texorpdfstring{$p=1$}{p=1}}\label{sec:p1}

In this final section we examine the endpoint case $p=1$. The relevant space is $\dot{BV}(\R^d)$, which, by definition, consists of all functions in $L^1_\loc(\R^d)$ whose distributional gradient is a finite Radon measure. It was noted in \cite{Fr} that $\sup_{\kappa> 0} \kappa \nu_1(\{m_f>\kappa\}) <\infty$ implies $f\in\dot{BV}(\R^d)$ (with the corresponding bound). This assertion can be strengthened in the spirit of Theorem \ref{main}.

\begin{theorem}\label{main2}
	Let $f\in L^1_\loc(\R^d)$ with
	\begin{equation}
		\label{eq:liminfcond2}
		\liminf_{\kappa\to 0} \kappa \nu_1(\{m_f>\kappa\}) <\infty \,.
	\end{equation}
	Then $f\in\dot{BV}(\R^d)$ and
	\begin{equation}
		\label{eq:main2}
			|Df|(\R^d) \lesssim_d \liminf_{\kappa\to 0} \kappa \nu_1(\{m_f>\kappa\}) \,.
	\end{equation}
\end{theorem}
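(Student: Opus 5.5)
The plan is to follow the proof of Theorem \ref{main} verbatim up to the duality step, which is the only place where $p>1$ was used. Lemma \ref{lem:single} is stated and proved for $1\le p<\infty$; the case $p=1$ only needs the bound $h_J^{p-1}\le 1$ in \eqref{eq:terminal}, which holds. So first I apply Lemma \ref{lem:single} with $p=1$: for every $h>0$, every $\kappa>0$ and every measurable $K$ of finite measure,
\[
\|\nabla(\rho_h*f)\|_{L^1(K)} \lesssim_d \frac{\kappa}{h}|K| + \kappa\,\nu_1(\{m_f>\kappa\}) \,.
\]
Next I fix $h$ and $K$ and take the liminf as $\kappa\to0$. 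The first term vanishes, leaving
\[
\|\nabla(\rho_h*f)\|_{L^1(K)}\lesssim_d L:=\liminf_{\kappa\to 0}\kappa\,\nu_1(\{m_f>\kappa\}) \,.
\]
Exhausting $\R^d$ by bounded $K$ and using monotone convergence gives $\sup_{h>0}\|\nabla(\rho_h*f)\|_{L^1(\R^d)}\lesssim_d L$.

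The remaining step replaces $L^p$ duality by the definition of total variation. For $\Phi\in C_c^1(\R^d;\R^d)$ with $\|\Phi\|_\infty\le1$, the convergence $\rho_h*f\to f$ in $L^1_\loc$ gives
\[
\left|\int f\,\operatorname{div}\Phi\,dx\right| = \lim_{h\downarrow0}\left|\int\nabla(\rho_h*f)\cdot\Phi\,dx\right| \le \sup_{h>0}\|\nabla(\rho_h*f)\|_{L^1}\lesssim_d L \,.
\]
Hence the distribution $f\mapsto -\int f\operatorname{div}\Phi$ is a bounded linear functional on $C_c^1$ with respect to the sup norm. It therefore extends to $C_0(\R^d;\R^d)$, and by the Riesz representation theorem $Df$ is a finite $\R^d$-valued Radon measure with $|Df|(\R^d)=\sup_\Phi\int f\operatorname{div}\Phi\lesssim_d L$. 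Alternatively, one can use weak-$*$ compactness of the bounded family of measures $\nabla(\rho_h*f)\,dx$ together with lower semicontinuity of total variation.

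I expect no real obstacle here. The only point to check is that Lemma \ref{lem:single} genuinely covers $p=1$, in particular Step 3, where $h_J^{p-1}=1$ stays bounded and $h_J^{-1}|A_J(\delta)|\to0$ by \eqref{eq:count}. Loss of reflexivity is handled by the measure-valued formulation above.
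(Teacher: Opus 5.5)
Your proposal is correct and matches the paper's proof: apply Lemma \ref{lem:single} with $p=1$ (its proof does cover $p=1$, since $h_J^{p-1}=1$ in \eqref{eq:terminal}), obtain \eqref{eq:mainproof} exactly as in the proof of Theorem \ref{main}, and then use the variational definition of total variation in place of $L^p$ duality. One small slip: the bounded functional is $\Phi\mapsto-\int f\operatorname{div}\Phi\,dx$, not $f\mapsto\ldots$.
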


\begin{proof}
	Based on the case $p=1$ of Lemma \ref{lem:single} one establishes \eqref{eq:mainproof} exactly as in the proof of Theorem \ref{main}. For $p=1$, the variational definition of total variation then gives the asserted bound.
\end{proof}

As observed in \cite{Fr}, the property $f\in\dot{BV}(\R^d)$ does \emph{not} imply $\sup_{\kappa> 0} \kappa \nu_1(\{m_f>\kappa\}) <\infty$, the characteristic function of a ball being a counterexample. The following result strengthens this assertion, by showing that the stronger property $f\in\dot W^{1,1}(\R^d)$ does not imply the weaker liminf condition \eqref{eq:liminfcond2}.

\begin{proposition}\label{prop:counterexample}
	For every $d\geq1$, there is a nonnegative function
	\[
	f\in W^{1,1}(\R^d)\cap C_c(\R^d)
	\]
	such that
	\[
	\nu_1\bigl(\{ m_f >\kappa\}\bigr)<\infty
	\qquad\text{for every }\kappa>0,
	\]
	but
	\[
	\lim_{\kappa\downarrow0}
	\kappa\,\nu_1\bigl( \{ m_f >\kappa\}\bigr)
	=+\infty.
	\]
\end{proposition}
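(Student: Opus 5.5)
The plan is to build $f$ as a sum of disjointly supported radial ``ramp'' bumps, each a smoothed-out multiple of the characteristic function of a ball. The counterexample in \cite{Fr} shows that a sharp jump across a sphere makes $\kappa\nu_1(\{m_f>\kappa\})$ diverge logarithmically. A ramp of width $\epsilon$ keeps $f$ in $W^{1,1}$ but still produces a factor $\log(R/\epsilon)$. Making these logarithms grow fast along a sequence of bumps whose amplitudes decay only geometrically forces $\kappa\nu_1(\{m_f>\kappa\})\to\infty$ along \emph{all} $\kappa\downarrow0$.

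\emph{The construction.} For $k\geq1$ put $a_k=R_k=2^{-k}$ and choose centers $x_k$, for instance $x_k=3\cdot 2^{-k}e_1$, so that the balls $B_{2R_k}(x_k)$ are pairwise disjoint, separated by gaps comparable to $R_k$, and contained in a fixed compact set. Let
\[
\phi_k(x)=\min\{1,\max\{0,(R_k+\epsilon_k-|x-x_k|)/\epsilon_k\}\},
\]
so that $\phi_k=1$ on $B_{R_k}(x_k)$ and $\phi_k$ vanishes outside $B_{R_k+\epsilon_k}(x_k)$. Choose $\epsilon_k<R_k$ with $\log(R_k/\epsilon_k)=4^{kd}$, and set $f=\sum_k a_k\phi_k$. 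Then $f\geq0$ is continuous and compactly supported; continuity at the accumulation point $0$ holds because $a_k\to0$. Moreover $\|\nabla f\|_1\simeq_d\sum_k a_kR_k^{d-1}<\infty$ and $\|f\|_1<\infty$, so $f\in W^{1,1}(\R^d)\cap C_c(\R^d)$.

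\emph{Lower bound.} Fix $k$, a radius $r\in(4\epsilon_k,R_k/10)$ and a center $a$ with $\bigl||a-x_k|-R_k\bigr|<r/4$. The ball $B_r(a)$ meets no other bump. A fixed fraction (depending only on $d$) of $B_r(a)$ lies in $B_{R_k}(x_k)$, where $f=a_k$, and a fixed fraction lies outside $B_{R_k+\epsilon_k}(x_k)$, where $f=0$. Hence $m_f(a,r)\geq c_da_k$. The set of such $(a,r)$ has $\nu_1$-measure
\[
\gtrsim_d \int_{4\epsilon_k}^{R_k/10} R_k^{d-1}\,r\,\frac{dr}{r^2}\gtrsim R_k^{d-1}\log(R_k/\epsilon_k)
\]
for $k$ large. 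Now take $\kappa\in[c_da_{k+1},c_da_k)$. Since $m_f>\kappa$ on the region built from bump $k+1$,
\[
\kappa\nu_1(\{m_f>\kappa\})\gtrsim a_{k+1}R_{k+1}^{d-1}4^{(k+1)d}\gtrsim 2^{kd}\to\infty .
\]

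\emph{Finiteness for each fixed $\kappa$.} This is the step needing the most care, since $\operatorname{Lip}(f)=\infty$. Given $\kappa>0$, pick $K$ with $a_K<\kappa/8$ and split $f=g+b$, where $g=\sum_{k<K}a_k\phi_k$ and $b=\sum_{k\geq K}a_k\phi_k$. The functional $m$ is subadditive, $m_{g+b}\leq m_g+m_b$, and $m_b\leq2\|b\|_\infty\leq 2a_K<\kappa/2$. Therefore $\{m_f>\kappa\}\subset\{m_g>\kappa/2\}$. Now $g$ is Lipschitz with compact support $S$, so:
\begin{itemize}
\item $m_g(a,r)\lesssim\operatorname{Lip}(g)\,r$, which excludes small radii $r\lesssim\kappa/\operatorname{Lip}(g)$;
\item $m_g(a,r)\leq2\|g\|_1/|B_r|$, which excludes large radii;
\item $m_g(a,r)=0$ when $B_r(a)\cap S=\emptyset$, which confines $a$ to a bounded set for bounded $r$.
\end{itemize}
Thus $\{m_g>\kappa/2\}$ is contained in a bounded set with $r$ bounded away from $0$, and so has finite $\nu_1$-measure. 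The main obstacle is getting the geometric lower bound $m_f\geq c_da_k$ uniformly, i.e.\ making sure the ball sees both plateaus with comparable mass. This is exactly where the restrictions $r>4\epsilon_k$ and the separation of the bumps are used.
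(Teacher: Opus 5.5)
Your construction is valid and genuinely different from the paper's, but two details need repair.

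First, the suggested centers $x_k=3\cdot 2^{-k}e_1$ do not make the balls $B_{2R_k}(x_k)$ disjoint. Indeed $|x_k-x_{k+1}|=\tfrac32\,2^{-k}=R_k+R_{k+1}$, so already $B_{R_k}(x_k)$ and $B_{R_{k+1}}(x_{k+1})$ are tangent and the ramps overlap. Taking $x_k=8\cdot 2^{-k}e_1$ works.

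Second, the last step is off by one. For $\kappa\in[c_da_{k+1},c_da_k)$, the region coming from bump $k+1$ only gives $m_f\geq c_da_{k+1}$, which need not exceed $\kappa$. Use bump $k$ instead, where $m_f\geq c_da_k>\kappa$. This gives $\kappa\,\nu_1(\{m_f>\kappa\})\gtrsim a_{k+1}R_k^{d-1}4^{kd}\simeq 2^{kd}$, which still diverges.

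The paper uses a single explicit function, $f(x_1,x')=g(x_1)\psi(x')$, built from the one-sided cusp $g(t)=\chi(t)(\ln(1/t))^{-\alpha}$ for $t>0$, $g=0$ for $t\leq 0$, with $0<\alpha<1$. Every ball of radius $r$ centered within $r/4$ of $\{x_1=0\}$ sees $f=0$ on one side and $f\geq(\ln(8/r))^{-\alpha}$ on the other. Hence $m_f(a,r)\gtrsim(\ln(8/r))^{-\alpha}$ at all small scales at once, and integrating $dr/r$ yields the explicit rate $\kappa\,\nu_1(\{m_f>\kappa\})\gtrsim\kappa^{1-1/\alpha}$.

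Your argument is instead a condensation of singularities. Each ramp reproduces, at levels $\kappa\simeq a_k$, the logarithmic divergence of the characteristic function of a ball, truncated at scale $\epsilon_k$, while costing only $a_kR_k^{d-1}$ in $W^{1,1}$. This is modular and lets you force an arbitrarily fast divergence by shrinking $\epsilon_k$. The price is annulus geometry and bookkeeping across levels, whereas the cusp gives a one-line lower bound with half-space geometry.

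For finiteness, your truncation and subadditivity reduction to a Lipschitz $g$ is correct but unnecessary. Since $f\in C_c(\R^d)$ is uniformly continuous, $m_f(a,r)\leq\omega_f(2r)$ excludes small radii directly. This is exactly the paper's argument, and it applies verbatim to your $f$.
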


The counterexample in the following proof is an adaptation of one in \cite{BrNg}.

\begin{proof}
	\emph{Step 1.}
	Fix $0<\alpha<1$. Choose $\chi\in C_c^\infty((-1/2,1/2))$ with
	$0\leq\chi\leq1$ and $\chi=1$ on $[-1/4,1/4]$, and define
	\begin{equation}\label{eq:cusp}
		g(t):=
		\begin{cases}
			\chi(t)\bigl(\ln(1/t)\bigr)^{-\alpha} & \text{if}\ 0<t<1/2 \,,\\
			0 &\text{otherwise} \,.
		\end{cases}
	\end{equation}
	The function $g$ is continuous, compactly supported and nonnegative.
	On $(0,1/4)$,
	\[
	g'(t)=\frac{\alpha}{t(\ln(1/t))^{\alpha+1}},
	\qquad
	\int_0^{1/4}|g'(t)|\,dt=(\ln4)^{-\alpha}<\infty.
	\]
	Away from zero the function is smooth, and $g(0+)=g(0-)=0$.
	Thus its distributional derivative has no jump contribution at zero,
	and $g\in W^{1,1}(\R)$.
	
	For $d=1$, take $f:=g$. For $d\geq2$, choose a nonnegative function
	$\psi\in C_c^\infty(\R^{d-1})$ equal to one on $B_2'\subset\R^{d-1}$ (we use a prime to denote balls in $\R^{d-1}$), and set
	\[
	f(x_1,x'):=g(x_1)\psi(x') \,.
	\]
	Then $f\in W^{1,1}(\R^d)\cap C_c(\R^d)$ and
	\[
	\|f\|_{L^1}=\|g\|_{L^1}\|\psi\|_{L^1},
	\qquad
	\|\nabla f\|_{L^1}
	\leq\|g'\|_{L^1}\|\psi\|_{L^1}
	+\|g\|_{L^1}\|\nabla\psi\|_{L^1}<\infty.
	\]
	
	In the remainder of the proof, primed variables and cutoffs are omitted when $d=1$.
	
	\medskip
	\emph{Step 2.} Now we derive a lower bound for the mean oscillation. Let $r_0:=1/8$, and consider centers $a\in\R^d$ and radii $r$ satisfying
	\begin{equation}\label{eq:centers}
		0<r<r_0,\qquad |a_1|<r/4,\qquad a'\in B_1' \,.
	\end{equation}
	The two balls
	\[
	B^-:=B_{r/8}\!\left(a-\frac r2e_1\right),
	\qquad
	B^+:=B_{r/8}\!\left(a+\frac r2e_1\right)
	\]
	are contained in $B_r(a)$, and each has volume
	$8^{-d}|B_r(a)|$. On $B^-$ one has $x_1<-r/8$, hence $f=0$.
	On $B^+$ one has
	\[
	r/8<x_1<7r/8<1/4.
	\]
	Moreover, the cutoff $\psi$ is identically one on $B_r(a)$,
	since $|x'|<1+r<2$ there. Consequently,
	\[
	f(x)\geq\bigl(\ln(8/r)\bigr)^{-\alpha}
	\qquad\text{for }x\in B^+.
	\]
	By nonnegativity,
	\[
	f_{B_r(a)}\geq8^{-d}\bigl(\ln(8/r)\bigr)^{-\alpha}.
	\]
	Integrating the deviation from this average over $B^-$ gives
	\begin{align}
		m_f(a,r)
		&\geq\frac{1}{|B_r(a)|}
		\int_{B^-}|f(x)-f_{B_r(a)}|\,dx
		=8^{-d}f_{B_r(a)}
		\geq b_d\bigl(\ln(8/r)\bigr)^{-\alpha} \,.
		\label{eq:oscillation}
	\end{align}
	with $b_d:=8^{-2d}$.
	
	\medskip
	\emph{Step 3.} Now we show that $\kappa\, \nu_1(\{m_f>\kappa\})\to +\infty$ as $\kappa\downarrow 0$. For $\kappa>0$, set
	\[
	r_\kappa:=8\exp\!\left[
	-\left(\frac{b_d}{2\kappa}\right)^{1/\alpha}
	\right].
	\]
	For all sufficiently small $\kappa>0$, one has $r_\kappa<r_0$.
	By \eqref{eq:oscillation},
	\[
	m_f(a,r)>2\kappa>\kappa
	\]
	whenever $r_\kappa<r<r_0$ and the center satisfies
	$|a_1|<r/4$, $a'\in B_1'$.
	For each such radius $r$, the set of these centers $a\in\R^d$ has measure
	$\omega_{d-1}r/2$, where $\omega_{d-1}:=|B_1'|$ if $d\geq 2$ and $\omega_0 :=1$. 
	
	 Therefore
	\begin{align*}
		\kappa\,\nu_1(\{m_f>\kappa\})
		&\geq\frac{\omega_{d-1}}2\,
		\kappa\int_{r_\kappa}^{r_0}\frac{dr}{r}\\
		&=\frac{\omega_{d-1}}2\,
		\kappa\ln\frac{r_0}{r_\kappa}\\
		&=\frac{\omega_{d-1}}2
		\left(\frac{b_d}{2}\right)^{1/\alpha}
		\kappa^{\,1-1/\alpha}
		-\frac{\omega_{d-1}}2 \left( \ln\frac8{r_0}\right) \kappa.
	\end{align*}
	Since $0<\alpha<1$, the right side tends to $+\infty$ as $\kappa\downarrow0$.
	
	\medskip
	\emph{Step 4.} Finally, we show that $\nu_1(\{m_f>\kappa\})<\infty$ for any $\kappa>0$.
	Fix $\kappa>0$. The function $f\in C_c(\R^d)$ is uniformly continuous.
	Writing
	\[
	\omega_f(s):=\sup_{|x-y|\leq s}|f(x)-f(y)| \,,
	\]
	we have
	\[
	m_f(a,r)\leq\omega_f(2r).
	\]
	Hence there is an $\varepsilon_\kappa>0$ such that
	$m_f(a,r)\leq\kappa$ for all $a\in\R^d$ whenever
	$0<r\leq \varepsilon_\kappa$. Meanwhile,
	\[
	m_f(a,r)
	\leq\frac{2}{|B_r(a)|}\int_{B_r(a)}|f(x)|\,dx
	\leq\frac{2\|f\|_{L^1}}{\omega_d r^d}.
	\]
	Thus there is an $R_\kappa>\varepsilon_\kappa$ such that
	$m_f(a,r)\leq\kappa$ for all $a\in\R^d$ whenever $r\geq R_\kappa$.
	Choose $R>0$ with $\supp f\subset B_R(0)$.
	If $r\leq R_\kappa$ and $m_f(a,r)>\kappa$, then $B_r(a)$ must
	meet $\supp f$, so $|a|\leq R+R_\kappa$. It follows that
	\[
	\nu_1(\{m_f>\kappa\})
	\leq |B_{R+R_\kappa}(0)|
	\int_{\varepsilon_\kappa}^{R_\kappa}\frac{dr}{r^2}
	<\infty.
	\]
	This completes the proof.
\end{proof}


\bibliographystyle{amsalpha}

\end{document}